\theoremstyle{definition}\newtheorem{theorem}{Theorem}[section]\newtheorem{lemma}[theorem]{Lemma}\newtheorem{proposition}[theorem]{Proposition}\newtheorem{corollary}[theorem]{Corollary}\newtheorem{definition}[theorem]{Definition}\newtheorem{Ex}[theorem]{Example}
\DeclareMathOperator{\Ue}{\mathrm{U}}\DeclareMathOperator{\gr}{\mathrm{gr}}\DeclareMathOperator{\Sa}{\mathrm{S}}\DeclareMathOperator{\Var}{\mathrm{Var}}\DeclareMathOperator{\Ann}{\mathrm{Ann}}\DeclareMathOperator{\GVar}{\mathrm{GVar}}
\newcounter{AP}
\begin{document}
\author{Alexey Petukhov}\address{Alexey Petukhov:
Jacobs University Bremen, Campus Ring 1, Bremen D28759, Germany\\ on leave from the Institute for Information Transmission Problems, Bolshoy Karetniy 19-1, Moscow 127994, Russia}
\email{alex-{}-2@yandex.ru}

\title{On annihilators of bounded $(\frak g, \frak k)$-modules}\maketitle
\begin{abstract} Let $\frak g$ be a semisimple Lie algebra and $\frak k\subset\frak g$ be a reductive subalgebra. We say that a $\frak g$-module $M$ is a {\it bounded $(\frak g, \frak k)$-module}  if $M$ is a direct sum of simple finite-dimensional $\frak k$-modules and the multiplicities of all simple $\frak k$-modules in that direct sum are universally bounded.

The goal of this article is to show that the ``boundedness'' property for a simple $(\frak g, \frak k)$-module $M$ is equivalent to a property of the associated variety of the annihilator of $M$ (this is the closure of a nilpotent coadjoint orbit inside $\frak g^*$) under the assumption that the main field is algebraically closed and of characteristic 0. In particular this implies that if $M_1, M_2$ are simple $(\frak g, \frak k)$-modules such that $M_1$ is bounded  and the associated varieties of the annihilators of $M_1$ and $M_2$ coincide then $M_2$ is also bounded. This statement is a geometric analogue of a purely algebraic fact due to I.~Penkov and V.~Serganova and it was posed as a conjecture in my Ph.D. thesis.

{\bf Key words:} $(\frak g, \frak k)$-modules, spherical varieties, symplectic geometry.

{\bf AMS subject classification.} 13A50, 14L24, 17B08, 17B63, 22E47.
\end{abstract}
\section{Introduction} A notion of admissible $(\frak g, \frak k)$-module for a generic pair of Lie algebras $(\frak g, \frak k)$ is a quite straightforward generalization of both Harish-Chandra modules and $\frak g$-modules of category $\EuScript O$. Bounded $(\frak g, \frak k)$-modules~\cite{PS2} are a very specific subclass of the admissible $(\frak g, \frak k)$-modules and it turns out that this class is related to the notion of spherical variety~\cite{T}. The structure of this subcategory is still very misterious in general and few examples are evaluated explicitly, see~\cite{SG},~\cite{PSZ2},~\cite{P4}.

The set works on both Harish-Chandra modules and category $\EuScript O$ seems to be infinite and one can find some introduction to these subjects in~\cite{KV, H}. We would like to mention explicitly works~\cite{P1, P2, PZ, SG, PSZ1, Z, F, Ma, Mi}.

The goal of this article is to show that this ``boundedness'' property for a simple $(\frak g, \frak k)$-module $M$ is equivalent to a property of the associated variety of the annihilator of $M$ (this is the closure of a nilpotent coadjoint orbit inside $\frak g^*$) under the assumption that the main field is algebraically closed and of characteristic 0. Roughly speaking, a simple $(\frak g, \frak k)$-module $M$ is bounded if and only if the associated variety of the annihilator of $M$ is $K$-coisotropic with respect to the action of an algebraic group $K$ attached to the pair $(\frak g, \frak k)$.

In particular this implies that if $M_1, M_2$ are simple $(\frak g, \frak k)$-modules such that $M_1$ is bounded  and the associated varieties of the annihilators of $M_1$ and $M_2$ coincide then $M_2$ is also bounded. This statement is a geometric analogue of~\cite[Theorem~4.3]{PS1} and it was posed as a conjecture in my Ph.D. thesis~\cite[Conjecture 2.1]{P3}.
\section{Definitions}
\subsection{$(\frak g, \frak k)$-modules}Let $\frak g$ be a Lie algebra over an algebraically closed field $\mathbb F$ of characterstic 0 and $\frak k$ be a subalgebra of $\frak g$. We say that a $\frak g$-module $M$ is a {\it $(\frak g, \frak k)$-module} if
$$\forall m\in M (\dim(\Ue(\frak k)\cdot m)<\infty)$$
where $\Ue(\frak k)$ is the universal enveloping algebra of $\frak k$.

It is clear that any $(\frak g, \frak k)$-module is isomorphic to the direct limit of a directed set of finite-dimensional $\frak k$-modules. Thus for any given simple $\frak k$-module $V$ we can define the Jordan-H\"older multiplicity $[M: V]$. If such multiplicities $[M: V]$ are finite for all simple $\frak k$-modules $V$ then $M$ is called {\it admissible}. If there exists $C_M\in\mathbb N$ such that all multiplicities $[M: V]$ are bounded by $C_M$ then $M$ is called {\it bounded}.
\subsection{Associated varieties} Let $M$ be a finitely generated $\frak g$-module $M$ (for example $M$ can be a simple $\frak g$-module). We consider a finite-dimensional generating subspace $M_0$ of $M$ and define spaces $M_i~(i\ge0)$ inductively using formula$$M_{i+1}:=M_i+\frak g\cdot M_i.$$
The associated graded object $$\gr M:=\oplus_{i\ge0}M_{i+1}/M_i$$ is a module of the associated graded algebra $\Sa(\frak g)$ of ${\rm U}(\frak g)$. Let $J$ be the annihilator of $M$ in $\Sa(\frak g)$. We denote by $\Var(M)$ the set of points $\chi\in\frak g^*$ such that $f(\chi)=0$ for all $f\in J$.

It is well known that $\Var(M)$ is the same for all choices of $M_0$, see for example~\cite[p.~19]{P3}, and therefore ${\rm Var}(M)$ is a proper invariant of $M$. Next, we consider the annihilator $\Ann_{\Ue(\frak g)} M$ of $M$ in ${\rm U}(\frak g)$ as a left $\frak g$-module and put $$\GVar(M):=\Var(\Ann_{\Ue(\frak g)}M).$$
We denote by $\overline{G\cdot\Var(M)}$ the closure of the $G$-translation of $\Var(M)$. Note that in many interesting cases $$\GVar(M)=\overline{G\cdot\Var(M)}.$$

Put $\sqrt J:=\{f\in\Sa(\frak g)\mid \exists n\in\mathbb Z_{>0}: (f^n\in J)\}$. The ideal $J$ is {\it involutive}, i.e. $\{\sqrt J, \sqrt J\}\subset \sqrt J$ where $\{\cdot,~\cdot\}$ is the standard Poisson bracket on $\Sa(\frak g)$, see~\cite{G}.

Next, if $M$ is a $(\frak g, \frak k)$-module then one can choose filtration $\{M_i\}$ to be $\frak k$-stable and hence $\frak k\subset J$. This condition immediately implies $$\Var(M)\subset\frak k^\bot$$where $\frak k^\bot:=\{\chi\in\frak g^*\mid \chi|_{\frak k}=0\}.$
\section{Main theorem}\label{Smain}
We would use notions and terminology on symplectic geometry and algebraic groups actions of~\cite{TZh}. Fix a semisimple Lie algebra $\frak g$ together with the adjoint group $G$ of $\frak g$, and also fix a subalgebra $\frak k\subset \frak g$. Assume that $\frak k$ is {\it algebraically reductive} in $\frak g$, that is there exists a connected reductive algebraic subgroup $K$ of $G$ such that $\frak k\subset\frak g$ is the Lie algebra of $K\subset G$. If such a subgroup $K$ exists then it is unique.
\begin{theorem}\label{Tmain}Assume that $\frak g$ is a semisimple Lie algebra and $\frak k$ is an algebraically reductive subalgebra of $\frak g$. Then, for a simple $(\frak g, \frak k)$-module $M$, the following conditions are equivalent:

(1) $M$ is a bounded $(\frak g, \frak k)$-module,

(2) $\GVar(M)$ is $K$-coisotropic.\end{theorem}
It turns out that it is possible to reduce the case of any subalgebra $\frak k$ of a semisimple Lie algebra $\frak g$ to the case of algebraically reductive subalgebra, see Propositions~\ref{Pbhat},~\ref{Pbhatr}.
\section{Algebraic subalgebras}
Let $\frak k$ be a subalgebra of a semisimple Lie algebra $\frak g$, $G$ be the adjoint group of $\frak g$, and let $\hat K$ be the least algebraic subgroup of $G$ such that the Lie algebra of $\hat K$ contains $\frak k$. Denote by $\hat{\frak k}$ the Lie algebra of $\hat K$. Then

$\bullet$  $\hat K$ is connected and $\hat K$ normalizes $\frak k$,

$\bullet$ $\hat{\frak k}/\frak k$ is abelian and $[\hat{\frak k}, \hat{\frak k}]=[\frak k, \frak k]$,\\
see~\cite[Subsection~3.3.3]{VO}.
\begin{definition} A subalgebra $\frak k$ is called {\it algebraic in} $\frak g$ if $\frak k=\hat{\frak k}$. A subalgebra $\frak k$ is called {\it reductive in} $\frak g$ if $\hat K$ is reductive.\end{definition}
It is clear that if $\frak k$ is algebraically reductive in $\frak g$ then $\frak k$ is reductive in $\frak g$. The inverse statement is false.
\begin{Ex}\label{Ealglie}Let $\frak g\cong\frak{sl}(3)$ be the Lie algebra of traceless 3$\times 3$ matrices and let $\frak k$ be the one-dimensional subalgebra spanned by
$$\left(\begin{array}{ccc}\alpha&0&0\\0&\beta&0\\0&0&-(\alpha+\beta)\end{array}\right),$$
$\alpha, \beta\in\mathbb F$. If
$$\alpha, \beta\in\mathbb F\backslash\mathbb Q{\rm~and~}\frac\alpha\beta\in\mathbb F\backslash\mathbb Q$$
then $\hat K$ is an algebraic torus consisting of the diagonal $3\times 3$ matrices with determinant 1, and therefore $\hat{\frak k}$ is the space of traceless diagonal 3$\times3$ matrices. In particular, this implies that $\frak k$ is reductive in $\frak{sl}(3)$, but $\frak k$ is {\bf not} algebraically reductive in $\frak{sl}(3)$.\end{Ex}
The following lemmas show that $\hat K$-modules, $\hat{\frak k}$-modules and $\frak k$-modules are close to each other.
\begin{lemma}\label{Laev1}Let $V_1, V_2$ be $\hat K$-modules. Then the following conditions are equivalent.

(1) $V_1\cong V_2$,

(2) $(V_1)|_{\hat k}\cong (V_2)|_{\hat k}$,

(3) $(V_1)|_{\frak k}\cong (V_2)|_{\frak k}$.\end{lemma}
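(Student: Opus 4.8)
The plan is to establish the cycle of implications $(1)\Rightarrow(2)\Rightarrow(3)\Rightarrow(1)$. The first two implications are immediate and carry no content: since $\mathfrak{k}\subseteq\hat{\mathfrak{k}}$ and the $\hat{\mathfrak{k}}$-module structure on any $\hat K$-module is obtained by differentiating the $\hat K$-action, an isomorphism of $\hat K$-modules restricts to an isomorphism of $\hat{\mathfrak{k}}$-modules, which in turn restricts to an isomorphism of $\mathfrak{k}$-modules. Thus the whole weight of the lemma lies in $(3)\Rightarrow(1)$, and my strategy there is to promote a bare $\mathfrak{k}$-isomorphism to a $\hat K$-isomorphism by invoking the defining minimality of $\hat K$.

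For $(3)\Rightarrow(1)$, let $\phi\colon V_1\to V_2$ be an isomorphism of $\mathfrak{k}$-modules and regard $W:=\mathrm{Hom}(V_1,V_2)$ as a rational $\hat K$-module via $g\cdot\psi=\rho_2(g)\,\psi\,\rho_1(g)^{-1}$, with differentiated $\hat{\mathfrak{k}}$-action. The statement that $\phi$ commutes with the two $\mathfrak{k}$-actions is precisely that $\phi$ is a $\mathfrak{k}$-invariant vector in $W$. I would then pass to the stabilizer $H:=\mathrm{Stab}_{\hat K}(\phi)$, which, being the stabilizer of a point in a rational representation, is a Zariski-closed and hence algebraic subgroup of $G$. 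Because the ground field has characteristic $0$, stabilizers are smooth and $\mathrm{Lie}(H)$ coincides with the infinitesimal stabilizer $\{x\in\hat{\mathfrak{k}}\mid x\cdot\phi=0\}$; the $\mathfrak{k}$-invariance of $\phi$ therefore yields $\mathfrak{k}\subseteq\mathrm{Lie}(H)$.

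Now the minimality of $\hat K$ finishes the argument. By definition $\hat K$ is the least algebraic subgroup of $G$ whose Lie algebra contains $\mathfrak{k}$, and $H$ is an algebraic subgroup of $G$ with $\mathrm{Lie}(H)\supseteq\mathfrak{k}$; hence $\hat K\subseteq H$. Since $H\subseteq\hat K$ by construction, we get $H=\hat K$, so $\phi$ is fixed by all of $\hat K$, i.e. it intertwines the two $\hat K$-actions. Being a linear isomorphism, $\phi$ is then an isomorphism of $\hat K$-modules, which closes the cycle.

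The step I expect to be the crux, and the only place the characteristic-$0$ hypothesis is genuinely used, is the identification of $\mathrm{Lie}(\mathrm{Stab}_{\hat K}(\phi))$ with the infinitesimal stabilizer, which is what transports the invariance of $\phi$ from the infinitesimal to the group level. It is worth remarking that this argument uses neither the fact that $\hat{\mathfrak{k}}/\mathfrak{k}$ is abelian nor that $[\hat{\mathfrak{k}},\hat{\mathfrak{k}}]=[\mathfrak{k},\mathfrak{k}]$; it rests solely on $\hat K$ being the algebraic hull of $\mathfrak{k}$, which is exactly the property forcing the weights occurring in a $\hat K$-module to be already separated by their restriction to $\mathfrak{k}$. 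Example~\ref{Ealglie} serves as a concrete sanity check: there the genericity of $\alpha,\beta$ is precisely the concrete manifestation of this minimality, since it guarantees that distinct integral weights of the torus $\hat K$ remain distinct after restriction to $\mathfrak{k}=\mathbb{F}h$.
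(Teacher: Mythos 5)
Your proposal is correct and follows essentially the same route as the paper: the author also proves $(3)\Rightarrow(1)$ by fixing a $\frak k$-module isomorphism $\phi\in{\rm Hom}_{\frak k}(V_1,V_2)$, taking its stabilizer $H$ in $\hat K$, noting $\frak k\subset\mathrm{Lie}(H)$, and concluding $H=\hat K$ from the minimality in the definition of $\hat K$. Your write-up merely makes explicit two points the paper leaves implicit --- the smoothness of stabilizers in characteristic $0$ identifying $\mathrm{Lie}(H)$ with the infinitesimal stabilizer, and the fact that the cycle $(1)\Rightarrow(2)\Rightarrow(3)\Rightarrow(1)$ renders the connected-group equivalence of $(1)$ and $(2)$ unnecessary.
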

\begin{proof}Conditions (1) and (2) are equivalent for a connected group in characteristic 0. It is clear that (2) implies (3). We left to show that (3) implies (1).

Assume (3). Consider vector space ${\rm Hom}_\mathbb F(V_1, V_2)$. We have that ${\rm Hom}_\frak k(V_1, V_2)\ne0$. Fix
\begin{equation}0\ne\phi\in{\rm Hom}_\frak k(V_1, V_2)\label{Ev1v2}\end{equation}
such that $\phi$ defines the isomorphism between $V_1$ and $V_2$. Let $H$ be the stabilizer of $\phi$ in $K$. Condition (\ref{Ev1v2}) implies that $\frak k$ is a subalgebra of the Lie algebra of $H$. The definition of $\hat K$ implies that $H=\hat K$. Therefore $\phi$ is an isomorphism of $\hat K$-modules.\end{proof}
\begin{lemma}\label{Laev2}Let $V$ be a $K$-module and let $W\subset V$ be an $\mathbb F$-subspace of $V$. The following conditions are equivalent.

(1) $W$ is $\hat K$-stable,

(2) $W$ is $\hat{\frak k}$-stable,

(3) $W$ is $\frak k$-stable.\end{lemma}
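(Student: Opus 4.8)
The plan is to establish $(1)\Rightarrow(2)\Rightarrow(3)$ and then the reverse implication $(3)\Rightarrow(1)$, where all the substance lies; throughout I regard $V$ as a $\hat K$-module, so that the three stability conditions all refer to the single action of $\hat K$ and of its subobjects. The implication $(1)\Rightarrow(2)$ is the standard fact that a subspace stable under a connected algebraic group in characteristic $0$ is stable under its Lie algebra, obtained by differentiating the (locally finite, rational) representation $\hat K\to\mathrm{GL}(V)$; and $(2)\Rightarrow(3)$ is immediate from the inclusion $\frak k\subset\hat{\frak k}$.

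For $(3)\Rightarrow(1)$ I would first treat the case of finite-dimensional $V$, reproducing the mechanism of Lemma~\ref{Laev1}. Assume $W$ is $\frak k$-stable and set $H:=\{g\in\hat K\mid g\cdot W=W\}$, the stabilizer of the subspace $W$ in $\hat K$. For finite-dimensional $V$ this is a closed, hence algebraic, subgroup of $\hat K$ and therefore of $G$, and its Lie algebra is $\{x\in\hat{\frak k}\mid x\cdot W\subset W\}$, which contains $\frak k$ precisely because $W$ is $\frak k$-stable. Now I invoke the defining property of $\hat K$ as the least algebraic subgroup of $G$ whose Lie algebra contains $\frak k$: since $H$ is such a subgroup, $\hat K\subseteq H$, and as trivially $H\subseteq\hat K$ we get $H=\hat K$, i.e. $W$ is $\hat K$-stable.

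The remaining point, which I expect to be the main obstacle, is to reduce the general case, where $V$ (and hence $W$) may be infinite-dimensional, to the finite-dimensional one, using that every $\hat K$-module is locally finite. Given $w\in W$, let $V_w$ be the finite-dimensional $\hat K$-submodule generated by $w$; by the already-established implication $(1)\Rightarrow(3)$ the submodule $V_w$ is $\frak k$-stable, so $W\cap V_w$ is a $\frak k$-stable subspace of the finite-dimensional $\hat K$-module $V_w$. Applying the finite-dimensional case to $W\cap V_w\subset V_w$ shows that $W\cap V_w$ is $\hat K$-stable, whence $\hat K\cdot w\subseteq W\cap V_w\subseteq W$. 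As $w\in W$ was arbitrary this gives $\hat K\cdot W=W$, which is $(1)$ in full generality.
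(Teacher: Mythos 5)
Your proof is correct and follows essentially the same route as the paper: the heart of $(3)\Rightarrow(1)$ is the identical stabilizer argument --- the paper realizes the stabilizer of $W$ as the stabilizer of the point $[W]\in\mathrm{Gr}(d;V)$ while you take the setwise stabilizer $H=\{g\in\hat K\mid g\cdot W=W\}$ directly, and both conclude $H=\hat K$ from the minimality of $\hat K$ among algebraic subgroups of $G$ whose Lie algebra contains $\frak k$. Your explicit reduction of the infinite-dimensional case to the finite-dimensional one via local finiteness is a careful refinement that the paper leaves implicit (its Grassmannian formulation tacitly treats $\dim W=d$ as finite), not a different method.
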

\begin{proof} Conditions (1) and (2) are equivalent for a connected group in characteristic 0. It is clear that (2) implies (3). We left to show that (3) implies (1).

Assume (3). Let $d$ be the dimension of $W$. Consider variety ${\rm Gr}(d; V)$ of $d$-dimensional subspaces of $V$ and denote by $[W]$ the point of ${\rm Gr}(d; V)$ defined by $W$. Let $H$ be the stabilizer of $[W]$ in $K$. Condition (3) implies that $\frak k$ is a subalgebra of the Lie algebra of $H$. The definition of $\hat K$ implies that $H=\hat K$. Therefore $W$ is a $\hat K$-submodule of $V$.\end{proof}
\begin{corollary}\label{Chats}Let $V$ be a finite-dimensional $\hat K$-module. Then the lattices of
\begin{center}$\hat K$-submodules of $V$, $\hat{\frak k}$-submodules of $V$ and $\frak k$-submodules of $V$\end{center}
coincide.\end{corollary}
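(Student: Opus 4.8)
The plan is to deduce the statement immediately from Lemma~\ref{Laev2}. The three collections in question are all sublattices of the lattice of \emph{all} $\mathbb F$-subspaces of $V$ (ordered by inclusion, with meet given by intersection and join by sum), so to show that they coincide it suffices to show that they coincide \emph{as sets}: the lattice operations are the purely vector-space operations $\cap$ and $+$, which make no reference whatsoever to the acting object ($\hat K$, $\hat{\frak k}$ or $\frak k$).

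First I would fix the finite-dimensional $\hat K$-module $V$ and let $W\subseteq V$ be an arbitrary subspace. Because $V$ is finite-dimensional, $W$ has a well-defined dimension $d$, so the Grassmannian argument of Lemma~\ref{Laev2} applies to $W$ and yields the equivalence of the conditions ``$W$ is $\hat K$-stable'', ``$W$ is $\hat{\frak k}$-stable'' and ``$W$ is $\frak k$-stable''. Since $W$ was arbitrary, the set of $\hat K$-submodules, the set of $\hat{\frak k}$-submodules and the set of $\frak k$-submodules of $V$ are literally one and the same subset of the subspaces of $V$.

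Finally I would observe that this common set is closed under $\cap$ and $+$ (stability under a group action, or under a Lie-algebra action, is preserved by intersections and sums), and that $W_1\cap W_2$ and $W_1+W_2$ depend only on $W_1,W_2$ as subspaces, not on the structure with respect to which they are stable. Hence the induced lattice structure is identical in all three interpretations, and the three lattices coincide. I do not expect a genuine obstacle here; the entire content of the corollary is carried by Lemma~\ref{Laev2}, and the only point needing a line of care is the verification that that lemma is applicable to $V$ viewed as a $\hat K$-module, for which the stabilizer-and-minimality argument used in its proof goes through verbatim with $\hat K$ itself in the role of the acting group.
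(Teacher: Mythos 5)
Your proposal is correct and takes essentially the same route as the paper, which states the corollary without a separate proof precisely because it is the immediate consequence of Lemma~\ref{Laev2} that you spell out (equivalence of the three stability conditions for every subspace $W$, plus the observation that the lattice operations $\cap$ and $+$ are structure-independent). Your closing remark is also the right reading of the one small wrinkle: Lemma~\ref{Laev2} is stated for a ``$K$-module'' $V$, evidently a slip for $\hat K$-module, and as you note its stabilizer-and-minimality argument applies verbatim with $\hat K$ as the acting group.
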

\begin{corollary}\label{Cbhat}Let $M$ be a direct limit of finite-dimensional $\hat K$-modules. Then

(1) $M$ is a bounded $(\hat{\frak k}, \hat{\frak k})$-module if and only if $M$ is a bounded $(\frak k, \frak k)$-module,

(2) $M$ is an admissible $(\hat{\frak k}, \hat{\frak k})$-module if and only if $M$ is an admissible $(\frak k, \frak k)$-module.\end{corollary}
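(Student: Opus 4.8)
The plan is to reduce both assertions to a single multiplicity bookkeeping identity: restriction of simple modules matches the Jordan--H\"older data of $M$ as an $\hat{\frak k}$-module with its data as a $\frak k$-module. Since $M$ is the union of a directed family of finite-dimensional $\hat K$-submodules $N_\alpha$ (I read ``direct limit'' with injective transition maps, so $M=\bigcup_\alpha N_\alpha$, which is the description used to define the multiplicities $[M:V]$), every finite-dimensional $\frak k$- or $\hat{\frak k}$-submodule of $M$ is contained in some $N_\alpha$ and is, by Corollary~\ref{Chats}, automatically $\hat K$-stable there. In particular all composition factors of $M$, computed for either the $\hat{\frak k}$- or the $\frak k$-action, already occur inside finite-dimensional $\hat K$-modules, so it suffices to argue one $N_\alpha$ at a time and then pass to the union.

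First I would check that restriction $V\mapsto V|_{\frak k}$ is a bijection from the simple $\hat K$-modules onto the simple $\frak k$-modules occurring in $M$. That a simple $\hat K$-module $V$ restricts to a \emph{simple} $\frak k$-module is immediate from Corollary~\ref{Chats}, since the $\frak k$-submodule lattice of $V$ equals its (trivial) $\hat K$-submodule lattice. Injectivity up to isomorphism is precisely the equivalence of (1) and (3) in Lemma~\ref{Laev1}. Surjectivity onto those simple $\frak k$-modules that actually occur in $M$ will come out of the next step, where every $\frak k$-composition factor of $M$ is exhibited in the form $V|_{\frak k}$.

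Next I would fix a finite-dimensional $\hat K$-submodule $N\subset M$ and compare composition series. By Corollary~\ref{Chats} the lattice of $\hat{\frak k}$-submodules of $N$ coincides with its lattice of $\frak k$-submodules, so a maximal chain of subspaces is simultaneously a composition series for the $\hat{\frak k}$- and the $\frak k$-action, with one and the same list of subquotient spaces. Each subquotient is a simple $\hat K$-module $V$ whose restriction $V|_{\frak k}$ is simple by the previous step, and Lemma~\ref{Laev1} tells us exactly when two such factors are isomorphic. Hence $[N:V]_{\hat{\frak k}}=[N:V|_{\frak k}]_{\frak k}$ for every simple $\hat K$-module $V$, while every simple $\frak k$-factor of $N$ is of this form. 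Taking the supremum over the $N_\alpha$ gives, for each simple $\hat K$-module $V$, the equality $[M:V]_{\hat{\frak k}}=[M:V|_{\frak k}]_{\frak k}$, and the simple $\frak k$-modules of nonzero multiplicity in $M$ are exactly the $V|_{\frak k}$.

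With this identity both statements are formal: the multiset of nonzero values $\{[M:W]_{\frak k}\}$ equals $\{[M:V]_{\hat{\frak k}}\}$, so all $\hat{\frak k}$-multiplicities are finite iff all $\frak k$-multiplicities are finite (part (2)), and $\sup_V[M:V]_{\hat{\frak k}}=\sup_W[M:W]_{\frak k}$ (part (1)). I expect the only genuinely delicate point to be the passage to the direct limit, namely verifying that $[M:-]$ is the supremum of the $[N_\alpha:-]$ and that no new simple $\frak k$-constituent is created in the colimit; this is exactly what the opening observation secures, since any finite-dimensional $\frak k$-submodule of $M$ lands in some $N_\alpha$ and is $\hat K$-stable there.
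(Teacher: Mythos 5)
Your proof is correct and is essentially the paper's own argument: the paper disposes of this corollary in one line, ``implied by Lemma~\ref{Laev1} and Corollary~\ref{Chats}'', and your write-up is exactly the unfolding of that line --- Corollary~\ref{Chats} makes any composition series of a finite-dimensional $\hat K$-submodule simultaneously an $\hat{\frak k}$- and $\frak k$-composition series, Lemma~\ref{Laev1} identifies the multiplicities factor by factor, and the passage to the directed union is handled correctly by cofinality of the $N_\alpha$ among finite-dimensional $\frak k$-submodules.
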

\begin{proof}This statement is implied by Lemma~\ref{Laev1} and Corollary~\ref{Chats}.\end{proof}






We wish to establish a connection between the admissible $(\frak g, \hat{\frak k})$-modules and the admissible $(\frak g, \frak k)$-modules. The first step is as follows.
\begin{lemma}\label{Ladm1}Let $M$ be an admissible $(\frak g, \frak k)$-module. Then $M$ is a $(\frak g, \hat{\frak k})$-module.\end{lemma}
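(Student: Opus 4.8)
The plan is to prove directly that for every $m\in M$ the subspace $\Ue(\hat{\frak k})\cdot m$ is finite-dimensional, which is exactly the assertion that $M$ is a $(\frak g,\hat{\frak k})$-module. The only structural input about $\hat{\frak k}$ that I will use is that $\hat K$ normalizes $\frak k$, so that $[\hat{\frak k},\frak k]\subseteq\frak k$, i.e. $\frak k$ is an ideal of $\hat{\frak k}$; this I will combine with the admissibility of $M$ as a $(\frak g,\frak k)$-module.

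First I fix $m\in M$ and set $N^{(0)}:=\Ue(\frak k)\cdot m$, which is finite-dimensional because $M$ is a $(\frak g,\frak k)$-module. Choosing a vector-space complement $\mathrm{span}\{t_1,\dots,t_r\}$ to $\frak k$ in $\hat{\frak k}$, I define inductively $N^{(j+1)}:=N^{(j)}+\sum_{i=1}^r t_i\cdot N^{(j)}$. Using $[\frak k,t_i]\subseteq\frak k$ together with the $\frak k$-stability of $N^{(j)}$, one checks that $N^{(j+1)}$ is again $\frak k$-stable. Moreover, for $y\in\frak k$ the correction term $[y,t_i]\cdot n$ lies in $N^{(j)}$, so the map $n\mapsto t_i\cdot n\bmod N^{(j)}$ is a homomorphism of $\frak k$-modules $N^{(j)}\to N^{(j+1)}/N^{(j)}$. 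Hence $N^{(j+1)}/N^{(j)}$ is a quotient of $(N^{(j)})^{\oplus r}$, so the simple $\frak k$-constituents of $N^{(j+1)}$ are among those of $N^{(j)}$, and therefore, by induction, among the finitely many constituents $V_1,\dots,V_k$ of $N^{(0)}$.

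The apparent obstacle is that $\dim N^{(j)}$ could double at each step, so that the increasing chain $N^{(0)}\subseteq N^{(1)}\subseteq\cdots$ need not terminate. This is exactly where admissibility enters: every $N^{(j)}$ is a finite-dimensional $\frak k$-submodule of $M$ whose constituents lie in the fixed finite set $\{V_1,\dots,V_k\}$, so its dimension is bounded by $\sum_{i=1}^k [M:V_i]\,\dim V_i$, a finite number independent of $j$. Consequently the chain stabilizes, say $N^{(J)}=N^{(J+1)}=:\bar N$. By construction $\bar N$ is $\frak k$-stable and satisfies $t_i\cdot\bar N\subseteq\bar N$ for every $i$, hence it is $\hat{\frak k}$-stable; being finite-dimensional and containing $m$, it yields $\Ue(\hat{\frak k})\cdot m\subseteq\bar N$, so $\Ue(\hat{\frak k})\cdot m$ is finite-dimensional, which completes the argument.
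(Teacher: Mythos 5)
Your proof is correct and follows essentially the same route as the paper: the paper also builds the filtration $M_{i+1}:=\hat{\frak k}M_i+M_i$, observes that the simple $\frak k$-constituents do not grow because $\hat{\frak k}/\frak k$ is a trivial $\frak k$-module (your $[\frak k,t_i]\subseteq\frak k$ correction-term computation is the same point, phrased via the surjection $(\hat{\frak k}/\frak k)\otimes M_i\twoheadrightarrow M_{i+1}/M_i$), and then invokes admissibility to force the chain to have bounded dimension. The only cosmetic difference is that you conclude by stabilization of the chain while the paper argues by contradiction with multiplicities tending to infinity.
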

\begin{proof} Let $M_0$ be a $\frak k$-stable subspace of $M$. Consider the $\hat{\frak k}$-submodule $\hat M_0$ of $M$ generated by $M_0$. It is enough to show that $\dim \hat M_0<\infty$. Set
$$M_{i+1}:=\hat{\frak k}M_i+M_i, i\ge0.$$
It is clear that each $M_i$ is finite dimensional and $\cup_iM_i=\hat M_0$. The definition of $M_i$ implies that there exists a surjective $\frak k$-morphism
$$\phi: (\hat{\frak k}/\frak k)\otimes M_i\twoheadrightarrow M_{i+1}/M_i$$
of $\frak k$-modules. Theorem~3 of~\cite[Subsection 3.3.3]{VO} implies that $$[\hat{\frak k}, \hat{\frak k}]=[\frak k, \frak k]=[\hat{\frak k}, \frak k]$$
and thus that $(\hat{\frak k}/\frak k)$ is a trivial $\frak k$-module. Therefore the list of simple $\frak k$-subquotients of $M_{i+1}$ equals the list of simple $\frak k$-subquotients of $M_{i}$ for all $i\ge0$, and hence these lists equal to the list of simple $\frak k$-subquotients of $M_0$. If $\lim\limits_{i\to\infty}\dim M_i=\infty$ then the multiplicity of at least one such a subquotient would tend to infinity with $i\to\infty$. That is incompatible with the assumption that $M$ is an admissible $(\frak g, \frak k)$-module.\end{proof}
Lemma~\ref{Ladm1} implies that if $M$ is an admissible $(\frak g, \frak k)$-module then

$\bullet$ $\Var(M)\subset(\hat{\frak k})^\bot$,

$\bullet$ $\Var(M)$ is stable under the action of $\hat K$.\\

\begin{proposition}\label{Pbhat} Let $\frak g$ be a semisimple Lie algebra, let $M$ be a simple $\frak g$-module, and let $\frak k\subset \frak g$ be a subalgebra. Then

(a) $M$ is a bounded $(\frak g, \hat{\frak k})$-module if and only if $M$ is a bounded $(\frak g, \frak k)$-module,

(b) $M$ is an admissible $(\frak g, \hat{\frak k})$-module if and only if  $M$ is an admissible $(\frak g, \frak k)$-module.\end{proposition}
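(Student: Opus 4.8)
The plan is to reduce both (a) and (b) to Corollary~\ref{Cbhat}. Note first that ``bounded'' and ``admissible'' for a $(\frak g,\hat{\frak k})$-module (respectively $(\frak g,\frak k)$-module) are conditions on the $\hat{\frak k}$-module (respectively $\frak k$-module) structure of $M$ alone, the symbol $\frak g$ only recording that $M$ is a $\frak g$-module. Thus the whole proposition is contained in the single assertion: \emph{if $M$ is a $(\frak g,\hat{\frak k})$-module, then $M$ is bounded (admissible) over $\frak k$ if and only if it is bounded (admissible) over $\hat{\frak k}$}. Granting this, Proposition~\ref{Pbhat} follows at once: whenever the hypothesis already supplies a $(\frak g,\hat{\frak k})$-structure there is nothing more to do, and whenever it supplies only a $(\frak g,\frak k)$-structure we first apply Lemma~\ref{Ladm1} --- recalling that a bounded module is admissible --- to promote $M$ to a $(\frak g,\hat{\frak k})$-module.

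To prove the assertion I would exhibit $M$, after a harmless twist, as a direct limit of finite-dimensional $\hat K$-modules and then quote Corollary~\ref{Cbhat}. By the construction of the algebraic hull, $\hat{\frak k}=\frak k+\frak z$ for a toral subalgebra $\frak z$, that is, a subspace of pairwise commuting semisimple elements of $\frak g$ contained in a Cartan subalgebra $\frak h$; let $\hat T\subset\hat K$ be the corresponding subtorus. Since $M$ is a $(\frak g,\hat{\frak k})$-module, $\frak z$ acts locally finitely, and I claim it acts semisimply. Indeed, for $x\in\frak z$ the operator $X$ by which it acts has a Jordan decomposition $X=X_s+X_n$ into locally finite operators; from $[X,Y]=\alpha(x)Y$ for each root vector $y\in\frak g_\alpha$ (with $Y$ its action) one gets $[X_n,Y]=0$, and likewise $X_n$ commutes with $\frak h$, so $X_n$ is $\frak g$-equivariant. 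As $M$ is simple, Schur's lemma (Quillen's lemma in infinite dimension) forces $X_n$ to be scalar, hence $0$. Therefore $M=\bigoplus_\nu M_\nu$ decomposes into $\frak z$-weight spaces; because the root spaces shift $\frak z$-weights by restrictions of roots, which lie in $X^*(\hat T)$, and $M$ is simple, all $\nu$ lie in one coset of $X^*(\hat T)$ in $\frak z^*$. Choosing a character $\psi$ of $\hat{\frak k}$ (necessarily trivial on $[\hat{\frak k},\hat{\frak k}]=[\frak k,\frak k]$) whose restriction to $\frak z$ represents the opposite coset, the $\hat{\frak k}$-module $M\otimes\mathbb F_\psi$ has all its $\frak z$-weights in $X^*(\hat T)$, so its finite-dimensional $\hat{\frak k}$-submodules integrate to the connected group $\hat K$; this realizes $M\otimes\mathbb F_\psi$ as a direct limit of finite-dimensional $\hat K$-modules.

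Finally, tensoring by a one-dimensional module is an autoequivalence of both the category of $\frak k$-modules and that of $\hat{\frak k}$-modules; it sends simples to simples and preserves every Jordan--H\"older multiplicity, so $M$ and $M\otimes\mathbb F_\psi$ are simultaneously bounded (admissible) over $\frak k$, and simultaneously so over $\hat{\frak k}$. Corollary~\ref{Cbhat}, applied to $M\otimes\mathbb F_\psi$, equates the $\frak k$- and $\hat{\frak k}$-versions for $M\otimes\mathbb F_\psi$, and untwisting transfers the equivalence to $M$. This proves the assertion, and with it the proposition.

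The step I expect to be the main obstacle is the integration: verifying that, once the $\frak z$-weights have been made integral, the finite-dimensional $\hat{\frak k}$-submodules of $M\otimes\mathbb F_\psi$ are genuine $\hat K$-modules and not merely $\hat{\frak k}$-modules. This is precisely where the minimality of $\hat K$ must be used --- it is what makes $\frak z$ toral and pins down $X^*(\hat T)$ as the correct weight lattice --- and where one must remember that a simple $\frak g$-module need not be a weight module for a full Cartan, only for the subtorus $\frak z$. Everything after the integration is a formal bookkeeping of multiplicities through Corollary~\ref{Cbhat} and Lemmas~\ref{Laev1} and~\ref{Laev2}.
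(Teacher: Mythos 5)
Your outer reduction is sound and matches the paper's logic (boundedness and admissibility are conditions on the $\frak k$-, resp.\ $\hat{\frak k}$-, structure alone; Lemma~\ref{Ladm1} promotes an admissible $(\frak g,\frak k)$-module to a $(\frak g,\hat{\frak k})$-module; the point is to compare $\frak k$- and $\hat{\frak k}$-multiplicities via Corollary~\ref{Cbhat}), and your semisimplicity argument for $\frak z$ via Jordan decomposition and Schur/Quillen is fine. But the step you yourself flagged as the main obstacle is a genuine gap, and it cannot be closed as proposed: making the $\frak z$-weights integral does \emph{not} make the finite-dimensional $\hat{\frak k}$-submodules of $M\otimes\mathbb F_\psi$ integrate to $\hat K$. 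Since $G$ is the adjoint group, the semisimple part of $\hat K$ carries its own fundamental-group obstruction, and no character twist can touch it, because every character $\psi$ of $\hat{\frak k}$ vanishes on $[\hat{\frak k},\hat{\frak k}]=[\frak k,\frak k]$. Concretely, take $\frak g=\frak{sl}(2)\oplus\frak{sl}(3)$ and $\frak k=\frak{sl}(2)\oplus\frak a$, where $\frak a$ is the irrational line of Example~\ref{Ealglie}; then $\hat K\cong PGL(2)\times\hat T$ with $\hat T$ the full diagonal torus, and $M=V_2\otimes V_3$ (the exterior tensor product of the natural modules) is a simple finite-dimensional, hence bounded, $(\frak g,\frak k)$-module. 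For every $\psi$ the restriction of $M\otimes\mathbb F_\psi$ to $[\frak k,\frak k]=\frak{sl}(2)$ is a direct sum of copies of $V_2$, which does not lift to $PGL(2)$; so $M\otimes\mathbb F_\psi$ is never a direct limit of finite-dimensional $\hat K$-modules and Corollary~\ref{Cbhat} simply does not apply to it. (A secondary soft spot: the structural claim $\hat{\frak k}=\frak k+\frak z$ with $\frak z$ toral is asserted ``by the construction of the algebraic hull'' but is not in the quoted part of~\cite{VO} and would itself need proof.)

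The paper sidesteps integration of $M$ entirely: citing the proof of Proposition~2.6 of~\cite{P1}, it records that $M$ is bounded (admissible) as a $(\frak g,\hat{\frak k})$-module, respectively $(\frak g,\frak k)$-module, if and only if $\mathbb F[\Var(M)]$ is bounded (admissible) as a $(\hat{\frak k},\hat{\frak k})$-, respectively $(\frak k,\frak k)$-, module, and then applies Corollary~\ref{Cbhat} to $\mathbb F[\Var(M)]$ rather than to $M$. Since $\Var(M)$ is a closed $\hat K$-stable subvariety of $\frak g^*$, its coordinate ring is automatically an algebraic $\hat K$-module, i.e.\ a direct limit of finite-dimensional $\hat K$-modules, so the hypothesis of Corollary~\ref{Cbhat} holds for free --- exactly the hypothesis your candidate module $M\otimes\mathbb F_\psi$ can fail. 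This is the repair your argument needs: replace $M$ by $\mathbb F[\Var(M)]$ (or otherwise show directly that the $\frak k$- versus $\hat{\frak k}$-discrepancy is purely toral, which is in effect what passing to $\Var(M)$ accomplishes), since the obstruction you correctly isolated is real and your twisting device removes only its toral part.
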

\begin{proof} It is straightforward to argue that $M$ is a bounded $(\frak g, \hat{\frak k})$-module (respectively bounded $(\frak g, \frak k)$-module) if and only if $\mathbb F[\Var(M)]$ is a bounded $(\hat{\frak k}, \hat{\frak k})$-module (respectively bounded $(\frak k, \frak k)$-module), see~\cite[Proof of Proposition~2.6]{P1}. In the same way we have that $M$ is an admissible $(\frak g, \hat{\frak k})$-module (respectively admissible $(\frak g, \frak k)$-module) if and only if $\mathbb F[\Var(M)]$ is an admissible $(\hat{\frak k}, \hat{\frak k})$-module (respectively admissible $(\frak k, \frak k)$-module). Therefore it is enough to verify the following facts:

(a$'$) $\mathbb F[\Var(M)]$ is a bounded $(\hat{\frak k}, \hat{\frak k})$-module if and only if $\mathbb F[\Var(M)]$ is a bounded $(\frak k, \frak k)$-module.

(b$'$) $\mathbb F[\Var(M)]$ is an admissible $(\hat{\frak k}, \hat{\frak k})$-module if and only if $\mathbb F[\Var(M)]$ is an admissible $(\frak k, \frak k)$-module.\\
These statements are implied by Corollary~\ref{Cbhat}.\end{proof}
Let $L$ be a maximal connected reductive subgroup of $\hat K$. Such a subgroup is unique up to conjugacy. Denote by $\frak l\subset\frak g$ the Lie algebra of $L$. It is clear that $\frak l$ is algebraically reductive in $\frak g$. We conclude this section with the following. \begin{proposition}\label{Pbhatr}Let $M$ be a simple $(\frak g, \hat{\frak k})$-module. Then

(a) $M$ is an admissible $(\frak g, \hat{\frak k})$-module if and only if $M$ is an admissible $(\frak g, \frak l)$-module,

(b) $M$ is a bounded $(\frak g, \hat{\frak k})$-module if and only if $M$ is a bounded $(\frak g, \frak l)$-module.\end{proposition}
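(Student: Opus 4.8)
The plan is to reduce the statement, exactly as in the proof of Proposition~\ref{Pbhat}, to an assertion about the coordinate ring $\mathbb F[\Var(M)]$ and then to exploit that this coordinate ring is a genuine rational $\hat K$-module. First I would invoke the mechanism of~\cite[Proof of Proposition~2.6]{P1}: since $\frak l$ is algebraically reductive and $\Var(M)\subset\hat{\frak k}^\bot\subset\frak l^\bot$ (the inclusion $\frak l\subset\hat{\frak k}$ gives $\hat{\frak k}^\bot\subset\frak l^\bot$, and the first inclusion is the remark following Lemma~\ref{Ladm1}), $M$ is a bounded (respectively admissible) $(\frak g,\hat{\frak k})$-module if and only if $\mathbb F[\Var(M)]$ is a bounded (respectively admissible) $(\hat{\frak k},\hat{\frak k})$-module, and likewise $M$ is a bounded (respectively admissible) $(\frak g,\frak l)$-module if and only if $\mathbb F[\Var(M)]$ is a bounded (respectively admissible) $(\frak l,\frak l)$-module. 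Writing $N:=\mathbb F[\Var(M)]$, it therefore suffices to prove that $N$ is bounded (respectively admissible) as an $(\hat{\frak k},\hat{\frak k})$-module if and only if it is so as an $(\frak l,\frak l)$-module. The decisive gain is that, $M$ being a $(\frak g,\hat{\frak k})$-module, $\Var(M)$ is a $\hat K$-stable closed subvariety of $\frak g^*$ (its ideal is $\hat{\frak k}$-stable, hence $\hat K$-stable as $\hat K$ is connected), so $N$ is a direct limit of finite-dimensional \emph{rational} $\hat K$-modules.

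Next I would bring in the structure of $\hat K$. Fix a Levi decomposition $\hat K=L\ltimes R_u(\hat K)$, so that $\hat{\frak k}=\frak l\oplus\frak n$ with $\frak n=\mathrm{Lie}(R_u(\hat K))$. Because $N$ is a rational $\hat K$-module and $\hat K$ is connected, a subspace of $N$ is $\hat K$-stable if and only if it is $\hat{\frak k}$-stable (the equivalence (1)$\Leftrightarrow$(2) of Lemma~\ref{Laev2}); hence the lattice of $\hat{\frak k}$-submodules of $N$ coincides with the lattice of rational $\hat K$-submodules of $N$, and the Jordan--H\"older factors of $N$ as an $\hat{\frak k}$-module are precisely its Jordan--H\"older factors as a rational $\hat K$-module. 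On any simple rational $\hat K$-module the unipotent radical $R_u(\hat K)$ acts trivially, equivalently $\frak n$ acts by $0$; thus each such factor is inflated from a simple $L$-module, and its restriction to $\frak l$ is a simple $\frak l$-module. By the characteristic $0$ dictionary between $L$- and $\frak l$-modules (Lemma~\ref{Laev1}), the assignment $V\mapsto V|_{\frak l}$ is a bijection from the simple $\hat{\frak k}$-subquotients of $N$ onto the simple $\frak l$-modules occurring in $N$.

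I would then compare multiplicities directly. Restricting an $\hat{\frak k}$-composition series of $N$ to $\frak l$ produces an $\frak l$-composition series whose factors are the restrictions of the original factors; since each simple $\hat{\frak k}$-subquotient $V$ restricts to the single simple $\frak l$-module $V|_{\frak l}$, passing to the limit over the finite-dimensional exhaustion of $N$ gives
\[
[N:U]_{\frak l}=[N:V_U]_{\hat{\frak k}}
\]
for every simple $\frak l$-module $U$, where $V_U$ denotes the simple $\hat{\frak k}$-module with $V_U|_{\frak l}\cong U$. This identity matches up all $\hat{\frak k}$-multiplicities with all $\frak l$-multiplicities, so finiteness of every multiplicity (admissibility) and a uniform bound on every multiplicity (boundedness) hold for $N$ over $\hat{\frak k}$ exactly when they hold over $\frak l$. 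Combined with the two reductions of the first paragraph, this yields both (a) and (b).

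Finally, the main obstacle to anticipate is precisely the point that the reduction to $N=\mathbb F[\Var(M)]$ circumvents: for an \emph{abstract} finite-dimensional $\hat{\frak k}$-module the unipotent radical need not act nilpotently (already a one-dimensional $\frak n=\mathbb F x$ can act on a simple module by a nonzero scalar), so the restriction map on simple modules is only many-to-one and yields nothing better than the easy implication ``bounded/admissible over $\frak l$ $\Rightarrow$ bounded/admissible over $\hat{\frak k}$''. What makes the converse --- and hence the equivalence --- work is that $N$ is a genuine rational $\hat K$-module, on which $R_u(\hat K)$ automatically acts locally nilpotently; carrying the whole comparison out on the $\hat K$-variety $\Var(M)$ rather than on $M$ itself is the essential step.
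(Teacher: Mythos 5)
Your proof is correct, and its engine is the same as the paper's: the paper's entire proof consists of citing two facts --- (a) the restriction of a simple finite-dimensional $\hat K$-module to $L$ is simple, and (b) the restrictions of two nonisomorphic simple finite-dimensional $\hat K$-modules to $L$ are nonisomorphic --- which are exactly what you establish via the Levi decomposition $\hat K=L\ltimes R_u(\hat K)$ and the triviality of the $R_u(\hat K)$-action on simple rational modules. Where you genuinely diverge is in what these facts are applied to: the paper applies them directly to $M$, tacitly treating the simple finite-dimensional $\hat{\frak k}$-subquotients of $M$ as rational $\hat K$-modules, whereas you first reduce, via the \cite[Proof of Proposition~2.6]{P1} mechanism already used in Proposition~\ref{Pbhat}, to $N=\mathbb F[\Var(M)]$, which really is a direct limit of finite-dimensional rational $\hat K$-modules. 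Your closing remark pinpoints a genuine subtlety: a simple $(\frak g,\hat{\frak k})$-module can have simple $\hat{\frak k}$-constituents on which $\frak n$ acts by a nonzero character (e.g.\ a Whittaker module for $\frak g=\frak{sl}(2)$ with $\frak k=\hat{\frak k}=\mathbb F e$, $e$ nilpotent, so $\frak l=0$), and for such constituents fact (b) fails, so the paper's two facts do not apply to $M$ verbatim; your detour through $\Var(M)$ supplies the missing integrability and in this sense fills in a step the paper's two-line proof leaves implicit. Note, though, that fact (a) survives abstractly: on a simple finite-dimensional $\hat{\frak k}$-module the radical, hence $\frak n$, acts by scalars, so the restriction to $\frak l$ is still simple --- which is why, as you observe, the direction ``bounded/admissible over $\frak l$ implies the same over $\hat{\frak k}$'' is easy, and only the converse needs rationality. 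The price of your route is reliance on the equivalence between boundedness/admissibility of $M$ over $(\frak g,\frak a)$ and of $\mathbb F[\Var(M)]$ over $(\frak a,\frak a)$ for $\frak a=\hat{\frak k}$ and $\frak a=\frak l$; but the paper invokes exactly this equivalence in its proof of Proposition~\ref{Pbhat}, so you stay entirely within its toolkit.
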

\begin{proof}This is implied by two facts:

(a) the restriction of a simple finite-dimensional $\hat K$-module to $L$ is simple,

(b) the restrictions of two nonisomorphic simple finite-dimensional $\hat K$-modules to $L$ are nonisomorphic.\end{proof}
\section{Proof of Theorem~\ref{Tmain}}
We use notation of Section~\ref{Smain} and of Theorem~\ref{Tmain}. Fix a simple $(\frak g, \frak k)$-module $M$. Then $\GVar(M)$ is irreducible~\cite{Jo}. Theorem~\ref{Tmain} is implied by the following propositions.
\begin{proposition}\label{Pmain1}If $M$ is a bounded $(\frak g, \frak k)$-module then $\GVar(M)$ is $K$-coisotropic.\end{proposition}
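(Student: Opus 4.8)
\emph{Plan of proof.} The strategy is to convert the representation-theoretic boundedness of $M$ into a geometric sphericity statement for the associated variety, and then to translate that sphericity into coisotropy of the $K$-action on the nilpotent orbit via the symplectic dictionary of~\cite{TZh}. First I would pass from $M$ to $\Var(M)$. Since $\frak k\subset J$, the variety $\Var(M)$ is contained in $\frak k^\bot$ and, $K$ being connected, is stable under $K$; moreover $\Var(M)\subset\GVar(M)$ because $\gr(\Ann_{\Ue(\frak g)}M)\subset J$. By the filtration argument already invoked in Proposition~\ref{Pbhat} (see~\cite[Proof of Proposition~2.6]{P1}), $M$ is a bounded $(\frak g,\frak k)$-module if and only if $\mathbb F[\Var(M)]$ is a bounded $(\frak k,\frak k)$-module; thus boundedness of $M$ yields a uniform bound $C$ on the multiplicities $\dim\mathrm{Hom}_K(V,\mathbb F[\Var(M)])$ over all simple $K$-modules $V$. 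Restriction of functions to an irreducible component $X$ of $\Var(M)$ is a surjection of $K$-modules, so $\mathbb F[X]$ inherits the same bound $C$.

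Next I would feed this into the growth theory of multiplicities for reductive group actions. As the highest weight $\lambda$ varies, the multiplicity of the corresponding simple $K$-module in $\mathbb F[X]$ grows like a quasi-polynomial of degree equal to the complexity $c_K(X)$, the codimension of a generic orbit of a Borel subgroup $B\subset K$ (see~\cite{T}). A uniform bound therefore forces $c_K(X)=0$, that is, each irreducible component $X$ of $\Var(M)$ is a spherical $K$-variety.

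The heart of the matter is the symplectic translation. By~\cite{Jo} we have $\GVar(M)=\overline{\mathcal O}$ for a single nilpotent coadjoint orbit $\mathcal O$, and Gabber's involutivity~\cite{G} makes $\Var(M)$ a coisotropic subvariety; combined with the fact that for the simple module $M$ the dimension of $\Var(M)$ equals half of $\dim\mathcal O$, each component $X$ meeting $\mathcal O$ is a Lagrangian subvariety of the symplectic manifold $\mathcal O$, lying in the zero fiber $\frak k^\bot\cap\mathcal O$ of the moment map $\mu\colon\mathcal O\to\frak k^*$, $\mu(\chi)=\chi|_{\frak k}$. The $K$-action on $\mathcal O$ is coisotropic precisely when its generic orbit is coisotropic, equivalently when the invariant field $\mathbb F(\mathcal O)^K$ is Poisson-commutative~\cite{TZh}. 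This is the classical shadow of the principle that $K$ acts coisotropically on a cotangent bundle $T^*Z$ if and only if $Z$ is a spherical $K$-variety: here $\mathcal O$ is modelled symplectically on $T^*X$, with the Lagrangian $X$ in the role of the zero section and $\mu$ in the role of the projection paired against $\frak k$. I would accordingly deduce coisotropy of the $K$-action on $\mathcal O$ from the sphericity of the Lagrangian $X$, exploiting that $\mathcal O$ is conical and $\mu$ is homogeneous to transport the dense-$B$-orbit condition on the zero fiber $X\subset\mu^{-1}(0)$ out to the generic $K$-orbit on all of $\mathcal O$.

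The main obstacle is exactly this last transfer. One must argue rigorously that a spherical Lagrangian component of $\Var(M)$ sitting over the central value $0$ of $\mu$ forces the generic $K$-orbit on the \emph{open} orbit $\mathcal O$ to be coisotropic: this means controlling the discrepancy between the central fiber and a generic fiber of $\mu$, accommodating the possible reducibility of $\Var(M)$, and converting the codimension estimate for the generic $B$-orbit on $X$ into the coisotropy dimension count for $K$ on the whole of $\mathcal O$. The conical structure of the nilpotent orbit and the homogeneity of the moment map are the tools I expect to carry out this reduction.
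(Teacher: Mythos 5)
Your opening reduction---boundedness of $M$ gives uniformly bounded multiplicities in $\mathbb F[\Var(M)]$ and hence sphericity of every irreducible component of $\Var(M)$---is exactly the paper's first step, which it gets wholesale from~\cite[Proposition~2.6]{P1}. From there, however, your proposal has two genuine gaps, the second of which you flag yourself.

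First, you invoke as a ``fact'' that for simple $M$ one has $\dim\Var(M)=\frac12\dim\mathcal O$, where $\mathcal O$ is the orbit dense in $\GVar(M)$, so that some spherical component $X$ meets $\mathcal O$ in a Lagrangian. For arbitrary simple $\frak g$-modules this holonomicity statement is false (Bernstein and Lunts constructed simple $\Ue(\frak{sl}(2))$-modules of Gelfand--Kirillov dimension $2$, twice the expected $\frac12\dim\GVar$), and nothing in your argument excludes the scenario in which every component of $\Var(M)$ is small and lies entirely in the boundary $\overline{\mathcal O}\setminus\mathcal O$; in that case your components never meet $\mathcal O$ and their sphericity says nothing about the $K$-action on the dense orbit. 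The paper closes precisely this hole with a dimension count: by~\cite[Theorem~9.11]{KL} the Gelfand--Kirillov dimension of $M$ is at least $\frac12\dim\GVar(M)$, so some component $X$ satisfies $\dim X\ge\frac12\dim\GVar(M)$; by~\cite[Proposition~2.6(b)]{P1} the intersection of $X$ with the orbit $\mathcal O_X$ dense in $G\cdot X$ is Lagrangian in $\mathcal O_X$, whence $\dim\mathcal O_X=2\dim X\ge\dim\GVar(M)$, and irreducibility of $\GVar(M)$~\cite{Jo} forces $\GVar(M)=\overline{\mathcal O_X}$. Only after this chain is the Lagrangian known to live in the dense orbit; your appeal to~\cite{Jo} alone gives irreducibility but not the identification of $\mathcal O_X$ with $\mathcal O$.

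Second, the step you yourself call the main obstacle---passing from sphericity of the Lagrangian $X$ to coisotropy of the $K$-action on $\mathcal O$---is left at the level of a cotangent-bundle heuristic, and that heuristic is not literally available: $\mathcal O$ is in general not $K$-symplectomorphic to $T^*X$, and transporting the dense-$B$-orbit condition from the central fiber $\mu^{-1}(0)$ to a generic fiber via the conical structure is exactly what would need proof. The paper does not perform this transfer by hand; it invokes~\cite[Theorem~7]{TZh}: since $X\cap\mathcal O$ is Lagrangian it is special in the sense of~\cite[Definition~4]{TZh}, and the corank formula $2{\rm c}(X)={\rm cork}+2\dim X-\dim\mathcal O$ then yields ${\rm cork}=2{\rm c}(X)=0$ directly from ${\rm c}(X)=0$. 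Without this result of Timash\"ev--Zhgun (or an equivalent substitute) your sketch does not close, so the proposal as written establishes the first third of the paper's argument and outlines, but does not prove, the rest.
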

\begin{proposition}\label{Pmain2}If $\GVar(M)$ is $K$-coisotropic then $M$ is an admissible $(\frak g, \frak k)$-module.\end{proposition}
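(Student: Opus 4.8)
The plan is to convert the module-theoretic statement into a question about $K$-invariants on $\Var(M)$, and then to extract the required finiteness from the coisotropy of $\GVar(M)$ by analyzing the moment map for the $K$-action on the coadjoint orbit along the null-fibre $\frak k^\bot=\mu^{-1}(0)$.

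First I would reduce to the coordinate ring. Exactly as in the proof of Proposition~\ref{Pbhat} (via \cite[Proof of Proposition~2.6]{P1}), $M$ is an admissible $(\frak g,\frak k)$-module if and only if $\mathbb F[\Var(M)]$ is an admissible $(\frak k,\frak k)$-module, i.e. every simple $K$-module occurs in $\mathbb F[\Var(M)]$ with finite multiplicity. For an irreducible affine $K$-variety $Y$ the multiplicity of the trivial module in $\mathbb F[Y]$ equals $\dim\mathbb F[Y]^K$, and a single nonconstant invariant already spans, through its powers, an infinite-dimensional invariant subspace; conversely $\mathbb F[Y]^K=\mathbb F$ means $K$ acts with a dense orbit, whence $\mathbb F[Y]\hookrightarrow\mathbb F[K/H]$ and every multiplicity is bounded by $\dim V^H<\infty$. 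Thus admissibility is equivalent to $K$ acting with a dense orbit on each irreducible component of $\Var(M)$, and this is the statement I aim to deduce from coisotropy.

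Second, I set up the symplectic picture. Write $\GVar(M)=\overline{\mathcal O}$ for the nilpotent orbit $\mathcal O$ (irreducible by \cite{Jo}), and let $\mu\colon\overline{\mathcal O}\to\frak k^*$ be the restriction map, which is the moment map for the $K$-action on the symplectic manifold $\mathcal O$. Since $\frak k\subset J$ we have $\Var(M)\subset\frak k^\bot=\mu^{-1}(0)$, and involutivity of $\sqrt J$ \cite{G} says that each irreducible component $Z$ of $\Var(M)$ meets $\mathcal O$ in a coisotropic subvariety, so $(T_yZ)^{\perp_\omega}\subset T_yZ$ at a smooth point $y\in Z\cap\mathcal O$. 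At such a point the moment-map identity $\ker d\mu_y=(T_y(Ky))^{\perp_\omega}$ together with $Z\subset\mu^{-1}(0)$ gives $T_yZ\subset(T_y(Ky))^{\perp_\omega}$; applying $\perp_\omega$ and then involutivity yields
\[
T_y(Ky)\subset(T_yZ)^{\perp_\omega}\subset T_yZ,
\]
so $Ky\subset Z$ and only the reverse inclusion of tangent spaces remains to be proved.

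Third, coisotropy of $\GVar(M)$ enters here. By the characterization of coisotropic actions in \cite{TZh}, coisotropy of $\mathcal O$ means that on a dense open locus $\Omega\subset\mathcal O$ the orbit is coisotropic, $(T_x(Kx))^{\perp_\omega}\subset T_x(Kx)$ (equivalently, $\mathbb F(\mathcal O)^K$ is algebraic over the collective functions $\mu^*\mathbb F(\frak k^*)^K$). At a generic $y\in Z\cap\Omega$ the inclusion $(T_y(Ky))^{\perp_\omega}\subset T_y(Ky)$ combines with $T_yZ\subset(T_y(Ky))^{\perp_\omega}$ to force $T_yZ\subset T_y(Ky)$, and with the opposite inclusion from the second step we get $T_yZ=T_y(Ky)$. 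Hence $\dim Ky=\dim Z$, the orbit $Ky$ is dense in $Z$, and $\mathbb F[Z]^K=\mathbb F$; running this over all components proves admissibility. The main obstacle is precisely the passage to this special fibre: coisotropy is a condition on \emph{generic} points of $\mathcal O$, whereas the components of $\Var(M)$ live in the degenerate null-fibre $\mu^{-1}(0)$, and a priori a component could lie entirely inside the proper closed set $\mathcal O\setminus\Omega$. To close this gap I would use the conical structure — $\overline{\mathcal O}$, $\Omega$ and $\mu^{-1}(0)$ are all stable under the scaling $\mathbb F^\times$-action and $\mu$ is homogeneous — to degenerate a generic value of $\mu$ to $0$ and invoke semicontinuity of orbit dimension, ensuring $Z\cap\Omega\neq\varnothing$; the collective-function description makes this transparent, since for the semisimple part of $\frak k$ the invariants $\mathbb F[\frak k^*]^K_+$ have no linear term and hence $\mu^*\mathbb F[\frak k^*]^K_+$ lies in the ideal of $\mu^{-1}(0)$, so every collective invariant is constant on $\Var(M)$. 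Making this generic-to-null-fibre transition rigorous for every component, checking that each component meets the open orbit $\mathcal O$ (as $\GVar(M)=\overline{G\cdot\Var(M)}$ forces at least one to), and accounting for the centre of $\frak k$ in the merely reductive case, is the technical heart of the argument.
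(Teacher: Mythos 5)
Your plan has a genuine gap, and it is exactly the one you flag yourself at the end: the passage from the generic locus $\Omega\subset\mathcal O$ to the null fibre $\mu^{-1}(0)$. The difficulty is worse than a technicality. Every $K$-orbit through $y\in\mu^{-1}(0)$ satisfies $\mu(Ky)=K\cdot 0=\{0\}$, hence $T_y(Ky)\subset\ker d\mu_y=(T_y(Ky))^{\perp_\omega}$: \emph{all} orbits in the null fibre are automatically isotropic. So $y\in Z\cap\Omega$ would force $T_y(Ky)$ to be simultaneously isotropic and coisotropic, i.e.\ Lagrangian in $\mathcal O$ --- which is essentially the conclusion you are trying to reach, not a hypothesis you may assume; for a coisotropic action with positive defect the generic coisotropic orbits lie over nonzero $\mu$-values and have dimension strictly greater than $\frac12\dim\mathcal O$, so there is no a priori reason for $\Omega$ to meet $\mu^{-1}(0)$ at all. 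Your proposed repairs do not close this: semicontinuity of orbit dimension under the $\mathbb F^\times$-degeneration goes the wrong way (orbit dimension can only drop in the limit, so degenerating generic coisotropic orbits into $\mu^{-1}(0)$ yields no lower bound there), and the observation that the collective invariants $\mu^*\mathbb F[\frak k^*]^K_+$ vanish on $\Var(M)$ says nothing about orbit dimensions in the fibre. You also leave untreated the components of $\Var(M)$ contained in the boundary $\overline{\mathcal O}\setminus\mathcal O$, and admissibility requires control of \emph{every} component. A smaller but real error in your first step: $\mathbb F[Y]^K=\mathbb F$ does \emph{not} imply a dense $K$-orbit (take $K=\mathbb G_m$ scaling $\mathbb F^2$: the invariants are constants, every orbit is one-dimensional, yet all multiplicities are finite). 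This does not break your logic, since you only use the harmless direction (dense orbit $\Rightarrow$ finite multiplicities), but it means you have replaced the actual target by a strictly stronger statement whose proof, in the paper, requires the full Theorem~\ref{Tmain} (sphericality of the components comes only in Proposition~\ref{Pmain3}, \emph{after} admissibility is known).

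The paper avoids the null-fibre geometry entirely and proves only what is needed: by \cite[Theorem~3.24]{VP}, admissibility of $\mathbb F[\Var(M)]$ is equivalent to $\dim\mathbb F[\Var(M)]^K<\infty$, a far weaker condition than a dense orbit. One then passes to invariant-theoretic quotients: $\mathbb F[\Var(M)]^K$ is a quotient of $\mathbb F[\GVar(M)]^K/m_0\mathbb F[\GVar(M)]^K$, where $m_0=\Sa(\frak k)^K\cap(\frak k\Sa(\frak k))$, so it suffices to show that the fiber of $\GVar(M)/\!\!/K\to\frak k^*/\!\!/K$ over $[m_0]$ is finite. This is supplied by Losev's Theorem 1.2.1 in \cite{Lo}, which bounds the dimension of \emph{every} fiber (not just the generic one) by $\dim X/\!\!/K-\underline{\mathrm{def}}_K X$, combined with the identity $\dim X=m_K(X)+\underline{\mathrm{def}}_K X$ valid because $\GVar(M)=\overline{\mathcal O}$ is generically symplectic. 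That uniform, all-fibers statement is precisely the ``special fibre'' input your argument is missing, and it is a substantive theorem of algebraic Hamiltonian geometry that cannot be recovered from the pointwise symplectic linear algebra in your second and third steps. If you want to salvage your approach, you would in effect have to reprove that result; the efficient route is to quote it, as the paper does.
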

\begin{proposition}\label{Pmain3} If $M$ is an admissible $(\frak g, \frak k)$-module and $\GVar(M)$ is $K$-coisotropic then $M$ is a bounded $(\frak g, \frak k)$-module.\end{proposition}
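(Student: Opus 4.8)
The plan is to convert boundedness into a statement about the $K$-module $\gr M$ and then to extract the required uniform multiplicity bound from the coisotropic geometry of $\GVar(M)$, using admissibility only to guarantee the finiteness and the dimension count that make the geometric input applicable. First I would choose a finite-dimensional $\frak k$-stable generating subspace $M_0$; then each $M_i$ in the standard filtration is automatically $\frak k$-stable, so $M\cong\gr M$ as $\frak k$-modules and $[M:V]=[\gr M:V]$ for every simple $\frak k$-module $V$. Since $\frak k$ is algebraically reductive I may pass from $\frak k$ to $K$ (Lemma~\ref{Laev1} and Corollary~\ref{Chats}) and regard $\gr M$ as a finitely generated $K$-equivariant module over $\Sa(\frak g)/\sqrt J$, that is, as a $K$-equivariant coherent sheaf $\mathcal F$ on the affine variety $\Var(M)\subset\frak k^\bot$. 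Boundedness of $M$ is then precisely the assertion $\sup_V[\mathcal F:V]<\infty$. Admissibility enters here twice: it guarantees that every isotypic component of $\mathcal F$ is finite-dimensional, and (via the involutivity of $\sqrt J$ recorded above together with the dimension theory of $\GVar(M)=\overline{G\cdot\chi}$) that $\mathcal F$ is supported on components of $\Var(M)$ of the expected half-dimension $\tfrac12\dim\GVar(M)$, so that these components are exactly the ones arising from the symplectic reduction discussed below.

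Next I would factor the multiplicities componentwise. On the smooth locus of an irreducible component $Z$ of $\Var(M)$ the sheaf $\mathcal F$ is locally free of some finite generic rank $r_Z$, and $Z$ is a $K$-stable subvariety. Comparing $K$-isotypic components at the generic point and then stratifying by $K$-orbit type, a Noetherian induction on the support reduces the problem to bounding, uniformly in $V$, the multiplicities $[\mathcal E:V]$ for $K$-equivariant sheaves $\mathcal E$ of finite generic rank on the $K$-orbit closures occurring in $\Var(M)$. If each such closure is a spherical $K$-variety, then its equivariant vector bundles have uniformly bounded $K$-multiplicities (the bounded-multiplicity property of spherical subgroups), and feeding this into the stratification closes the estimate: finite generic rank times bounded coordinate-ring multiplicities yields $\sup_V[\gr M:V]<\infty$, hence $M$ is bounded. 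Thus everything comes down to showing that the $K$-orbit closures inside $\Var(M)$ are spherical; since orbit closures in a spherical variety are again spherical, it suffices to prove that the components of $\Var(M)$ themselves are spherical $K$-varieties.

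The heart of the matter, and the step I expect to be the main obstacle, is to deduce this sphericity from the coisotropy of $\GVar(M)$. Here I would use the moment map $\mu\colon\GVar(M)\to\frak k^*$ induced by restriction $\frak g^*\to\frak k^*$, so that $\mu^{-1}(0)=\GVar(M)\cap\frak k^\bot$ contains $\Var(M)$. As $\GVar(M)$ is irreducible and its open stratum is a single coadjoint orbit with the Kirillov--Kostant--Souriau form, $K$-coisotropy means exactly that the generic $K$-orbit in that orbit is coisotropic, equivalently that $\mathbb F(\GVar(M))^K$ is Poisson-commutative, equivalently that the generic fibre of $\mu$ is a single $K$-orbit. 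By the Knop-type relation between the $K$-complexity of a Hamiltonian variety and the dimension of the generic moment-map fibre, coisotropy forces the complexity of the symplectic reductions to vanish, and a local-normal-form argument near a generic point of $Z$ (where the action looks like the cotangent lift of a spherical $K/H$ together with a symplectic slice) identifies $Z$ with a spherical $K$-variety.

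Making this last transfer rigorous is the principal difficulty: I must carry the complexity-zero conclusion from the generic fibre of $\mu$ to the distinguished fibre $\mu^{-1}(0)$, controlling the behaviour of $\mu$ and of the complexity across the singular boundary strata of $\overline{G\cdot\chi}$, and I must ensure that the half-dimensionality supplied by admissibility matches $Z$ with a genuine component of the coisotropic reduction rather than a degenerate fibre. Once the sphericity of the components of $\Var(M)$ is established, the factorisation of the second paragraph gives the uniform bound and completes the proof of the proposition.
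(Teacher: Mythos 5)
Your overall skeleton agrees with the paper's: reduce boundedness to the sphericity of the irreducible components $X_i$ of $\Var(M)$ (the paper gets this equivalence directly from \cite[Proposition~2.6]{P1}, so your sheaf-theoretic stratification in the first two paragraphs, while plausible, is machinery the paper simply cites away), and then try to extract sphericity from the $K$-coisotropy of $\GVar(M)$. But the extraction step is exactly where your proposal stops being a proof, and you say so yourself: ``making this last transfer rigorous is the principal difficulty.'' The paper closes this gap with two precise inputs that your sketch lacks. First, admissibility is used not in the way you describe, but through \cite[Theorem~2]{P2}: the support of an admissible $(\frak g,\frak k)$-module is $K$-\emph{isotropic}. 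Combined with Gabber's involutivity \cite{G}, which makes $X_i\cap\mathcal O_i$ coisotropic in the orbit $\mathcal O_i$ dense in $G\cdot X_i$, this shows each $X_i$ is a \emph{special} subvariety in the sense of \cite[Definition~4]{TZh}, and then \cite[Theorem~7]{TZh} (formula~(\ref{Etzh})) yields the exact identity $2\,{\rm c}(X_i)={\rm cork}\,\mathcal O_i$. This theorem is the rigorous substitute for your ``local-normal-form argument near a generic point of $Z$,'' and nothing in your sketch reproduces it. Relatedly, your claim that admissibility plus involutivity forces the support onto components of dimension $\tfrac12\dim\GVar(M)$ is wrong: involutivity alone only gives the coisotropic lower bound $\dim X_i\ge\tfrac12\dim\mathcal O_i$, the upper bound is the $K$-isotropy theorem, and the relevant half-dimension is $\tfrac12\dim\mathcal O_i$, where $G\cdot X_i$ may be a \emph{proper} orbit closure strictly inside $\GVar(M)$, so components need not have dimension $\tfrac12\dim\GVar(M)$ at all.

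The second missing input is forced by exactly this last point: since the orbits $\mathcal O_i$ can lie in the boundary of $\GVar(M)$, any argument about the generic fibre of the moment map on $\GVar(M)$ (your Poisson-commutativity/generic-orbit discussion) says nothing about them. One needs that $K$-coisotropy of $\GVar(M)$ descends to every orbit $\mathcal O_i$ contained in it; the paper invokes \cite[Proposition~2.7]{AP} (see also \cite[Theorem~1.2.4]{Lo}) for precisely this, giving ${\rm cork}\,\mathcal O_i=0$, whence ${\rm c}(X_i)=0$ and each $X_i$ is spherical. You correctly identified both difficulties --- transferring complexity zero from the generic stratum to $\mu^{-1}(0)$, and controlling the boundary strata --- but identifying an obstacle is not overcoming it, and neither the special-subvariety formula of Timash\"ev--Zhgun nor the coisotropy-descent result appears in your proposal even in outline. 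As written, the argument establishes the (standard) reduction to sphericity and then defers the theorem's actual content.
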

\begin{proof}[Proof of Proposition~\ref{Pmain1}] Proposition 2.6 of~\cite{P1} implies that if $M$ is a bounded $(\frak g, \frak k)$-module then all irreducible components of $\Var(M)$ are $K$-spherical varieties.

The Gelfand-Kirillov dimension of $\Ue(\frak g)/\Ann_{\Ue(\frak g)}M$ equals $\dim\GVar(M)$. Theorem~9.11 of~\cite{KL} implies that the Gelfand-Kirillov dimension of $M$ is at least $\frac12\dim\GVar(M)$. Hence there exists an irreducible component $X$ of $\Var(M)$ such that $$\dim X\ge\frac12\dim\GVar(M).$$  Let $\mathcal O$ be the nilpotent coadjoint which is dense in $G\cdot X$. Proposition 2.6(b) of~\cite{P1} implies that $X\cap\mathcal O$ is Lagrangian in $\mathcal O$ and hence that $$\dim X=\frac12\dim\mathcal O\ge\frac12\dim\GVar(M).$$ On the other hand we have that $\mathcal O\subset\GVar(M)$. The variety $\GVar(M)$ is irreducible~\cite{Jo} and thus $$\GVar(M)=\overline{\mathcal O}.$$ Recall that $X\cap\mathcal O$ is Lagrangian in $\mathcal O$, and therefore $X\cap\mathcal O\subset\mathcal O$ is special in the sense of~\cite[Definition~4]{TZh}. Next, ~\cite[Theorem~7]{TZh} implies that
\begin{equation}2{\rm c}(X)={\rm cork} M+2\dim X-\dim \mathcal O\label{Etzh}\end{equation}
(notation of~\cite{TZh}), and thus $2{\rm c}(X)={\rm cork} M$. The variety $X$ is $K$-spherical and therefore ${\rm c}(X)=0$. Hence ${\rm cork} M=0$ and $M$ is $K$-coisotropic.\end{proof}
\begin{proof}[Proof of Proposition~\ref{Pmain2}] It is enough to show that $M$ is an admissible $(\frak g, \frak k)$-module, or equivalently that $\mathbb F[\Var(M)]$ is an admissible $(\frak k, \frak k)$-module, see~\cite[Proof of Proposition~2.6]{P1}. The last statement is equivalent to the condition $$\dim (\mathbb F[\Var(M)]^K)<\infty,$$
see~\cite[Theorem~3.24]{VP}.

The embedding $\frak k\hookrightarrow \frak g$ defines the sequence of maps
$$\Sa(\frak k)\hookrightarrow\Sa(\frak g)\twoheadrightarrow\mathbb F[\GVar(M)]\twoheadrightarrow\mathbb F[\Var(M)].$$ Further we have
$$\Sa(\frak k)^K\hookrightarrow\Sa(\frak g)^K\twoheadrightarrow\mathbb F[\GVar(M)]^K\twoheadrightarrow\mathbb F[\Var(M)]^K.$$
Next, we need the following lemma.
\begin{lemma}\label{Llo}For any maximal ideal $m\in\Sa(\frak k)^K$ the algebra $\mathbb F[\GVar(M)]^K/m\mathbb F[\GVar(M)]^K$ is finite dimensional.\end{lemma}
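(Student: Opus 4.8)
The plan is to recast the statement as a finiteness property of the ``invariant moment map'' and to deduce it from coisotropy. Write $A:=\Sa(\frak k)^K=\mathbb F[\frak k^*]^K$ and $B:=\mathbb F[\GVar(M)]^K$, where $\GVar(M)=\overline{\mathcal O}$ is the closure of a nilpotent coadjoint orbit $\mathcal O$. Since $K$ is reductive both are finitely generated, and since $\overline{\mathcal O}$ is a cone while the restriction map $\mu\colon\frak g^*\to\frak k^*$ is linear, the homomorphism $A\to B$ induced by $\mu$ is a graded homomorphism of connected graded domains; geometrically it is the comorphism of $\Phi\colon\overline{\mathcal O}/\!/K\to\frak k^*/\!/K$. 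By the graded Nakayama lemma, $B$ is a finite $A$-module if and only if $B/A_+B$ is finite-dimensional, where $A_+$ is the augmentation ideal (the maximal ideal corresponding to the vertex $0\in\frak k^*/\!/K$); and once $B$ is a finite $A$-module, $B/mB$ is automatically finite-dimensional for every maximal ideal $m$. Hence the assertion for all $m$ is equivalent to $\Phi$ being a finite morphism, and it suffices to show that the fibre $B/A_+B$ over the vertex is finite-dimensional.

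First I would extract generic finiteness of $\Phi$ from coisotropy. On the smooth symplectic orbit $\mathcal O$, with its Kirillov--Kostant--Souriau form $\omega$, the moment map satisfies $\ker d\mu_x=(\frak k\!\cdot\! x)^{\perp_\omega}$, the symplectic orthogonal of the tangent space $T_x(Kx)=\frak k\!\cdot\! x$. The hypothesis that the $K$-action on $\GVar(M)$ is coisotropic means exactly that the generic orbit $Kx$ is coisotropic, i.e. $(\frak k\!\cdot\! x)^{\perp_\omega}\subset\frak k\!\cdot\! x$. Combined with the identity $\frak k_{\mu(x)}\!\cdot\! x=\frak k\!\cdot\! x\cap\ker d\mu_x$, this gives $\frak k_{\mu(x)}\!\cdot\! x=\ker d\mu_x$, so for generic $x$ the fibre of $\mu$ through $x$ is a single $K_{\mu(x)}$-orbit and the generic fibre of $\Phi$ is a point. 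In particular $\dim\overline{\mathcal O}/\!/K$ equals the dimension of the image of $\Phi$, which is at most $\dim\frak k^*/\!/K$; this is the dimension count compatible with finiteness of $\Phi$.

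The hard part is to upgrade generic finiteness to finiteness, i.e. to rule out that $\Phi$ contracts a positive-dimensional subvariety of $\overline{\mathcal O}/\!/K$ to the vertex. Here the conical structure helps: the fibre dimensions of $\Phi$ are upper semicontinuous and constant along the $\mathbb G_m$-orbits on the base $\frak k^*/\!/K$, and $0$ lies in the closure of every such orbit, so the vertex fibre has the largest dimension among all fibres. It therefore suffices to bound this one fibre, and I would do so by invoking equidimensionality of the invariant moment map (Knop's theorem, available within the symplectic framework of~\cite{TZh}) for the Hamiltonian $K$-variety $\mathcal O$: all nonempty fibres of $\Phi$ then share the generic dimension $0$, forcing $\dim(B/A_+B)<\infty$.

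The main obstacle is precisely this last step, and two points require care. First, $\overline{\mathcal O}$ is singular, so the equidimensionality statement must be applied on the smooth locus $\mathcal O$ and then transported to $B=\mathbb F[\overline{\mathcal O}]^K$; since $\mathcal O$ is open and dense in $\overline{\mathcal O}$ this affects only the comparison of the two invariant rings along the boundary. Second, the vertex fibre of $\Phi$ a priori receives contributions from the boundary orbits $\overline{\mathcal O}\setminus\mathcal O$, and one must verify that these strata create no extra positive-dimensional components: writing the vertex fibre set-theoretically as the image of $\mu^{-1}(\mathcal N)\cap\overline{\mathcal O}$, where $\mathcal N\subset\frak k^*$ is the null-cone, the claim is that every point of this set has $0$ in the closure of its $K$-orbit. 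Establishing this — either through the equidimensionality above or through a direct Hilbert--Mumford argument combined with coisotropy — is the crux; once it is in place, graded Nakayama delivers the lemma for all maximal ideals $m$ simultaneously.
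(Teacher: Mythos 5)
Your reductions are sound as far as they go: writing $A=\Sa(\frak k)^K$, $B=\mathbb F[\GVar(M)]^K$, using graded Nakayama plus the conical contraction to see that the vertex fibre of $\Phi\colon\GVar(M)/\!\!/K\to\frak k^*/\!\!/K$ dominates all other fibres is correct, and your derivation of generic finiteness of $\Phi$ from coisotropy (via $\ker d\mu_x=(\frak k\cdot x)^{\perp_\omega}\subset\frak k\cdot x$ on the open orbit $\mathcal O$) is the same dimension count the paper performs. But the step you yourself label ``the crux'' is a genuine gap, and it is exactly where the paper's proof has its content. There is no ``Knop's theorem'' of the required generality available in~\cite{TZh}: Knop's equidimensionality results for quotients of moment maps are proved for cotangent bundles $T^*X$, and $\overline{\mathcal O}$ is not of that form; \cite{TZh} supplies the corank/complexity formula~(\ref{Etzh}) used in Propositions~\ref{Pmain1} and~\ref{Pmain3}, not a fibre-dimension bound for $X/\!\!/K\to\frak k^*/\!\!/K$. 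The missing ingredient is Losev's Theorem~1.2.1 of~\cite{Lo}, which is formulated precisely for the class at hand --- conical, generically symplectic (in the sense of~\cite[Definition~2.2.3]{Lo}) Hamiltonian $K$-varieties, singularities and boundary strata included --- and bounds the dimension of \emph{every} fibre of $X/\!\!/K\to\frak k^*/\!\!/K$ by $\dim X/\!\!/K-{\rm\underline{def}}_KX$. Extending the cotangent-bundle statement to this class is a main theorem of~\cite{Lo}, not something one can wave in.

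Given that theorem, the paper closes the argument with a count you do not quite reach: Rosenlicht's theorem gives ${\rm trdeg}\,\mathbb F(X)^K=\dim X-m_K(X)$, hence $\dim X/\!\!/K\le\dim X-m_K(X)$, while~\cite[Subsection~5.9]{Lo} gives $\dim X=m_K(X)+{\rm\underline{def}}_KX$ for $X=\GVar(M)$, generically symplectic and $K$-coisotropic (this identity is where the standing coisotropy hypothesis of Proposition~\ref{Pmain2} is consumed --- it is the vanishing of the corank in Losev's framework, and it fails without coisotropy, e.g.\ for a maximal torus acting on the regular nilpotent orbit closure in $\frak{sl}(3)^*$, where the fibres are positive-dimensional). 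Combining the three displays, every fibre is at most $0$-dimensional, which is Lemma~\ref{Llo} for all maximal ideals $m$ at once; in particular your Nakayama/semicontinuity reduction, while correct, becomes unnecessary, and both of your stated worries --- the singular locus and the boundary orbits of $\overline{\mathcal O}$ --- are absorbed by the hypotheses of Losev's theorem rather than handled separately. A direct Hilbert--Mumford argument, as you alternatively suggest, would still have to control $\mu^{-1}(\mathcal N)\cap(\overline{\mathcal O}\setminus\mathcal O)$, about which coisotropy of the open orbit alone says nothing; so without importing \cite[Theorem~1.2.1]{Lo} (or reproving it) the proposal does not close.
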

\begin{proof} The proof is based on~\cite[Theorem 1.2.1]{Lo}, see also~\cite[Corollary~5.9.1]{Lo}. We use the notation of~\cite{Lo}. 
The variety $\GVar(M)$ is irreducible~\cite{Jo} and we have
$${\rm trdeg}~\mathbb F(\GVar(M))^K=\dim \GVar(M)-m_K(\GVar(M))$$
where ${\rm trdeg}~\mathbb F(\GVar(M))^K$ is the transcendence degree of $\mathbb F(\GVar(M))^K$. This implies that the Gelfand-Kirillov dimension of $\mathbb F[X]^K$ is at most $\dim X-m_K(X)$ and therefore
$$\dim X/\!\!/K\le \dim X-m_K(X).$$
We fix a maximal ideal $m$ of $\Sa(\frak k)^K$ and denote by $[m]$ the respective point in the maximal spectrum $\frak k^*/\!\!/K$ of $\Sa(\frak k)^K$. The algebra $\mathbb F[\GVar(M)]/m\mathbb F[\GVar(M)]$ is finite dimensional if and only if the fiber of the map
$$X/\!\!/K\to \frak k^*/\!\!/K$$
is at most 0-dimensional. Theorem 1.2.1 of~\cite{Lo} implies that the dimension of this fiber is at most
$$\dim X/\!\!/K-{\rm\underline{def}}_KX\le\dim X-m_K(X)-{\rm\underline{def}}_KX.$$
The variety $\GVar(M)$ is generically symplectic in the sense of~\cite[Definition 2.2.3]{Lo} and therefore
$$\dim X=m_K(X)+{\rm\underline{def}}_KX,$$
see~\cite[Subsection 5.9]{Lo}. This implies that the dimension of the fiber under consideration is at most 0.\end{proof}
Put $m_0:=\Sa(\frak k)^K\cap(\frak k\Sa(\frak k))$. It is clear that $m_0$ is a maximal ideal of $\Sa(\frak k)^K$. Next,
$$\Var(M)\subset\frak k^\bot$$
implies that $\mathbb F[\Var(M)]$ is annihilated by $\frak k\Sa(\frak g)$. Therefore $\mathbb F[\Var(M)]^K$ is also annihilated by $m_0$. This implies that $\mathbb F[\Var(M)]^K$ is a quotient of
$$\mathbb F[\GVar(M)]^K/m_0\mathbb F[\GVar(M)]^K.$$
By Lemma~\ref{Llo} we have$$\dim \mathbb F[\GVar(M)]^K/m_0\mathbb F[\GVar(M)]^K<\infty,$$ and therefore $\dim \mathbb F[\Var(M)]^K<\infty$.
\end{proof}
\begin{proof}[Proof of Proposition~\ref{Pmain3}] Let $X_1,..., X_n$ be the irreducible components of $\Var(M)\subset\GVar(M)$. Proposition~2.6 of~\cite{P1} implies that it is enough to show that all $X_i$th are $K$-spherical varieties.

Fix $i$. The variety $G\cdot X_i$ contains unique dense orbit $\mathcal O_i$. Moreover, $X_i\cap \mathcal O_i$ is a coisotropic subvariety of $\mathcal O_i$~\cite{G}. On the other hand, $X_i$ is $K$-isotropic~\cite[Theorem~2]{P2}. This implies that $X_i$ is $K$-special in a sense of~\cite[Definition 4]{TZh}. Applying formula~(\ref{Etzh}) we have
$$2{\rm c}(X_i)={\rm cork}(\mathcal O_i).$$ Next, ${\rm c}(X_i)=0$ if and only if $X_i$ is $K$-spherical and thus we need to check that
$${\rm cork}~\mathcal O_i=0,$$ i.e. that the action of $K$ on $\mathcal O_i$ is coisotropic. This is a consequence of~\cite[Proposition 2.7]{AP} and the facts that $\mathcal O_i\subset\GVar(M)$ and $\GVar(M)$ is $K$-coisotropic, see also~\cite[Theorem~1.2.4]{Lo}.
\end{proof}
\section{Acknowledgements}
I would like to thank Roman Avdeev, Dmitry Timash\"ev and Vladimir Zhgun for useful discussions on spherical varieties. I also thank the math department of the University of Manchester for a very encouraging atmosphere. This work was supported by Leverhulme Trust Grant RPG-2013-293 and by RFBR grant 16-01-00818.

\end{document}